%%%%%%%%%%%%%%%%%%%%%%%%%%%%%%%%%%%%%%%%%
%%%%%%      Topics on SCM 25 April 2015  (final version)      %%%%%%%%%%%%%
%%%%%%%%%%%%%%%%%%%%%%%%%%%%%%%%%%%%%%%%%

\documentclass[a4, 12pt]{amsart}
\usepackage{amssymb}
\usepackage{amstext}
\usepackage{amsmath}
\usepackage{amscd}
\usepackage{latexsym}
\usepackage{amsfonts}
\usepackage{color}
\usepackage{enumerate}
\usepackage{textcomp}
\usepackage[all]{xy}

\usepackage{appendix}

%%%%%%%%%%%%%%%%%%%%%%%%%%%%%%%%%%%%%%%%%%%%%%%%%%%%%%%%%%%%%%%%%%%%%
\theoremstyle{plain}
\newtheorem{thm}{Theorem}[section]
\newtheorem*{thm*}{Theorem}
\newtheorem*{cor*}{Corollary}
\newtheorem*{defn*}{Definition}

\newtheorem{prop}[thm]{Proposition}
\newtheorem{lem}[thm]{Lemma}
\newtheorem{cor}[thm]{Corollary}
\newtheorem{claim}[thm]{Claim}
\newtheorem*{claim*}{Claim}
\newtheorem*{ac}{Acknowledgments}

\theoremstyle{definition}

\newtheorem{rem}[thm]{Remark}

\theoremstyle{remark}

\numberwithin{equation}{thm}
%%%%%%%%%%%%%%%%%%%%%%%%%%%%%%%%%%%%%%%%%%%%%%%%%%%%%%%%%%%%%%%%%%%

\def\m{\mathfrak m}

\def\p{\mathfrak p}
\def\q{\mathfrak q}

\newcommand{\calD}{\mathcal{D}}

\newcommand{\calS}{\mathcal{S}}

\def\depth{\mathrm{depth}}
\def\Supp{\mathrm{Supp}}

\def\Ass{\mathrm{Ass}}

\tolerance=9999

\setlength{\oddsidemargin}{1.1mm}
\setlength{\evensidemargin}{1.1mm}
\setlength{\topmargin}{-1.cm}
\setlength{\headheight}{1.2cm}
\setlength{\headsep}{1.0cm}
\setlength{\textwidth}{15.6cm}
\setlength{\textheight}{21.5cm}

\begin{document}

\setlength{\baselineskip}{12pt}
%%%%%%%%%%%%%%%%%%%%%%%%%%%%%%%%%%%%%%%%%%%%%%%%%%%%%%%%%%%%%
\title{Topics on sequentially Cohen-Macaulay modules}
\author{Naoki Taniguchi}
\address{Department of Mathematics, School of Science and Technology, Meiji University, 1-1-1 Higashi-mita, Tama-ku, Kawasaki 214-8571, Japan}
\email{taniguti@math.meiji.ac.jp}
\urladdr{http://www.isc.meiji.ac.jp/~taniguci/}

\author{Tran Thi Phuong}
\address{Department of Mathematics, School of Science and Technology, Meiji University, 1-1-1 Higashi-mita, Tama-ku, Kawasaki 214-8571, Japan}
\email{sugarphuong@gmail.com}

\author{Nguyen Thi Dung}
\address{Thai Nguyen University of Agriculture and Forestry, Thai Nguyen, Vietnam}
\email{xsdung050764@gmail.com}

\author{Tran Nguyen An}
\address{Thai Nguyen University of Pedagogical, Thai Nguyen, Vietnam}
\email{antrannguyen@gmail.com}

\thanks{2010 {\em Mathematics Subject Classification.}13E05, 13H10}
\thanks{{\em Key words and phrases:} Dimension filtration, Sequentially Cohen-Macaulay module, Localization}

\thanks{The first author was partially supported by Grant-in-Aid for JSPS Fellows 26-126 and by JSPS Research Fellow. The second author was partially supported by JSPS KAKENHI 26400054. The third and the fourth author were partially supported by a Grant of Vietnam Institute for Advanced Study in Mathematics (VIASM) and Vietnam National Foundation for Science and Technology Development (NAFOSTED). }

\begin{abstract}
In this paper, we study the two different topics related to sequentially Cohen-Macaulay modules. The questions are when the sequentially Cohen-Macaulay property preserve the localization and the module-finite extension of rings.
\end{abstract}

\maketitle
\tableofcontents
\section{Introduction}

Throughout this paper, unless other specified, let $R$ be a commutative Noetherian ring, $M \neq (0)$ a finitely generated $R$-module of dimension $d$. Then there exists the largest $R$-submodule $M_n$ of $M$ with $\dim_R M_n \le n$ for every $n \in \Bbb Z$ (here $\dim_R(0) = -\infty$ for convention).
Let 
\begin{eqnarray*}
\calS (M) &=& \{\dim_R N \mid N \ \text{is~an~}R\text{-submodule~of~}M, N \ne (0)\} \\
                 &=& \{\dim R/\p \mid \p \in \Ass_RM \}.
\end{eqnarray*}
Put $\ell = \sharp \calS (M)$ and write $\calS (M) = \{d_1< d_2 < \cdots < d_\ell = d\}$. Then {\it the dimension filtration of $M$} is a chain
$$
\calD : D_0 :=(0) \subsetneq D_1 \subsetneq  D_2 \subsetneq  \ldots \subsetneq D_{\ell} = M
$$
of $R$-submodules of $M$, where $D_i = M_{d_i}$ for every $1 \le i \le \ell$. 
Notice that the notion of dimension filtration is due to S. Goto, Y. Horiuchi and H. Sakurai (\cite{GHS}) and a little different from that of the original one given by P. Schenzel (\cite{Sch}). However, let us adopt the above definition throughout this paper. 
We say that $M$ is {\it a sequentially Cohen-Macaulay $R$-module}, if the quotient module $C_i = D_i/D_{i-1}$ of $D_i$ is Cohen-Macaulay for every $1 \le i \le \ell$. In particular, the ring $R$ is called {\it a sequentially Cohen-Macaulay ring}, if $\dim R < \infty$ and $R$ is a sequentially Cohen-Macaulay module over itself.

The aim of this paper is to investigate the two following questions. The first one is that does the sequentially Cohen-Macaulay property is inherited from localizations, which has been already studied by Cuong-Goto-Truong (\cite{CGT}) and Cuong-Nhan (\cite{CN}) in the case of local rings. 
In Section 2, we shall probe into a possible generalization of their results (\cite[Proposition 2.6]{CGT}, \cite[Proposition 4.7]{CN}). More precisely, we prove the following.

\begin{thm}\label{4.1}
Suppose that $\dim R/\p = \dim R_P/{\p R_P}$ for every $\p \in \Ass_R M$ and for all  maximal ideal $P$ of $R$ such that $\p \subseteq P$.
Then the following conditions are equivalent.
\begin{enumerate}[$(1)$]
\item $M$ is a sequentially Cohen-Macaulay $R$-module. 
\item $M_P$ is a sequentially Cohen-Macaulay $R_P$-module for every $P \in \Supp_R M$.
\end{enumerate}
\end{thm}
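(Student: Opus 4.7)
My plan is to prove Theorem \ref{4.1} by reducing both directions to the known local case \cite[Proposition 2.6]{CGT}, via a single key lemma: under the dimension hypothesis, the dimension filtration $\calD$ of $M$ localises, at every maximal $\m\in\Supp_R M$, to the dimension filtration of $M_\m$. Concretely, I would establish $(D_i)_\m=(M_\m)_{d_i}$ for every $0\le i\le\ell$, so that $\{(D_i)_\m\}$ is, after collapsing repeated terms, the dimension filtration of $M_\m$, with consecutive quotients equal to the nonzero ones among the $(C_i)_\m$, where $C_i=D_i/D_{i-1}$.

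The lemma itself is a direct computation with primary decomposition. I fix a minimal primary decomposition $(0)=\bigcap_{\p\in\Ass_R M}N(\p)$ in $M$ and use the standard identification
$$M_n=\bigcap_{\p\in\Ass_R M,\,\dim R/\p>n}N(\p).$$
Localising at $\m$ and observing that $N(\p)_\m=M_\m$ whenever $\p\not\subseteq\m$ yields
$$(D_i)_\m=\bigcap_{\p\in\Ass_R M,\,\p\subseteq\m,\,\dim R/\p>d_i}N(\p)_\m.$$
On the other hand $(0)=\bigcap_{\p\subseteq\m}N(\p)_\m$ is a minimal primary decomposition of $(0)$ in $M_\m$ with associated primes $\p R_\m$, so the same identification applied to $M_\m$ describes $(M_\m)_{d_i}$ by the same intersection but with $\dim R/\p$ replaced by $\dim R_\m/\p R_\m$. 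The hypothesis enters here precisely: for $\p\in\Ass_R M$ with $\p\subseteq\m$ we have $\dim R/\p=\dim R_\m/\p R_\m$, so the two intersections coincide.

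Granted this lemma, the two implications are short. For $(1)\Rightarrow(2)$: each $C_i$ is Cohen-Macaulay, hence $(C_i)_\m=(D_i)_\m/(D_{i-1})_\m$ is zero or Cohen-Macaulay, and by the lemma these are the consecutive quotients of the dimension filtration of $M_\m$, so $M_\m$ is sequentially Cohen-Macaulay for every maximal $\m\in\Supp_R M$. For an arbitrary $P\in\Supp_R M$ I would then pick maximal $\m\supseteq P$ and apply the local case \cite[Proposition 2.6]{CGT} to $M_\m$ over $R_\m$ to conclude $M_P=(M_\m)_{PR_\m}$ is sequentially Cohen-Macaulay. For $(2)\Rightarrow(1)$: by assumption each $M_\m$ is sequentially Cohen-Macaulay, so by the lemma $(C_i)_\m$ is zero or Cohen-Macaulay for every maximal $\m$; the standard local-global principle for Cohen-Macaulay modules then forces $C_i$ itself to be Cohen-Macaulay, whence $M$ is sequentially Cohen-Macaulay.

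The principal obstacle is the key lemma. Without the dimension hypothesis some $\p\in\Ass_R M$ could strictly drop in coheight upon localising at a maximal ideal, pushing $N(\p)_\m$ into the wrong slot of the dimension filtration of $M_\m$ and breaking the identity $(D_i)_\m=(M_\m)_{d_i}$. The hypothesis is exactly what forbids this drop; once it is in place, all remaining steps reduce to formal bookkeeping with primary decompositions and an appeal to the local case.
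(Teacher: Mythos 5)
Your proposal is correct. The heart of it --- the identity $(D_i)_\m=(M_\m)_{d_i}$ obtained by localizing the description $M_n=\bigcap_{\p\in\Ass_RM,\ \dim R/\p>n}N(\p)$ and using the hypothesis to match $\dim R/\p$ with $\dim R_\m/\p R_\m$ --- is exactly the computation the paper carries out in its Claim for the two indices $i$ and $i-1$ in the direction $(2)\Rightarrow(1)$; you simply state it uniformly for all $i$, and your treatment of that direction then coincides with the paper's. Where you genuinely diverge is $(1)\Rightarrow(2)$: the paper does \emph{not} reuse the localization of the filtration there, but instead argues by induction on $\ell=\sharp\calS(M)$, passing to $0\to D_{\ell-1}\to M\to C_\ell\to 0$, using the hypothesis to see that $\dim_{R_P}M_P=\dim_{R_P}[C_\ell]_P$, and then gluing the top quotient of the dimension filtration of $[D_{\ell-1}]_P$ with $[C_\ell]_P$ via the fact that an extension of two Cohen--Macaulay modules of equal dimension is Cohen--Macaulay. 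Your version avoids the induction and the depth-lemma gluing entirely: once the filtration is known to localize, the successive quotients of the dimension filtration of $M_\m$ are the nonzero $(C_i)_\m$, which are Cohen--Macaulay because the $C_i$ are, and the reduction from maximal ideals to arbitrary $P\in\Supp_RM$ via \cite[Proposition 2.6]{CGT} is the same in both treatments. The one point worth making explicit in your write-up is the bookkeeping that the distinct terms of $\{(M_\m)_{d_i}\}_i$ really are the dimension filtration of $M_\m$ (using $\calS(M_\m)\subseteq\calS(M)$, which itself relies on the dimension hypothesis); this is routine but it is where the collapsing of repeated terms is justified. On balance your argument is a clean, slightly more economical reorganization of the same ingredients.
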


Another one is the question of whether the sequentially Cohen-Macaulay property preserve the module-finite extension of rings or not. In Section 3, especially, we will give the characterization of sequentially Cohen-Macaulay local rings obtained by the idealization (that is, trivial extension), which is stated as follows.

\begin{thm}[Corollary \ref{cor5}] 
Suppose that $R$ is a local ring. Let $R\ltimes M$ denote the idealization of $M$ over $R$. Then the following conditions are equivalent.
\begin{enumerate}
\item[$(1)$] $R \ltimes M$ is a sequentially Cohen-Macaulay local ring.
\item[$(2)$] $R\ltimes M$ is a sequentially Cohen-Macaulay $R$-module. 
\item[$(3)$] $R$ is a sequentially Cohen-Macaulay local ring and $M$ is a sequentially Cohen-Macaulay $R$-module. 
\end{enumerate}
\end{thm}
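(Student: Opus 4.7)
The plan is to split the three-way equivalence as $(1)\Leftrightarrow(2)$ via a change-of-rings comparison and $(2)\Leftrightarrow(3)$ via a direct sum analysis of the dimension filtration.

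\emph{Step 1 ($(1)\Leftrightarrow(2)$).} The ring $S=R\ltimes M$ is module-finite over $R$ and local with maximal ideal $\mathfrak n=\mathfrak m\ltimes M$. Since the ideal $(0)\ltimes M$ is square-zero, $\mathfrak mS$ is $\mathfrak n$-primary, so for every finitely generated $S$-module $N$ one has $\dim_RN=\dim_SN$ and $\mathrm{depth}_RN=\mathrm{depth}_SN$, whence $N$ is Cohen-Macaulay over $R$ if and only if it is Cohen-Macaulay over $S$. Next I would verify that the dimension filtrations of $S$ as an $R$-module and as an $S$-module coincide. For each $n$, let $S_n$ denote the largest $R$-submodule of $S$ of dimension $\le n$; since every $S$-submodule is already an $R$-submodule, it suffices to show that each $S_n$ is actually an $S$-submodule. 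But $((0)\ltimes M)\cdot S_n$ is an $R$-submodule of $S$ annihilated by $\mathrm{Ann}_R S_n$, so its dimension is $\le\dim_R S_n\le n$, and the maximality of $S_n$ forces $((0)\ltimes M)\cdot S_n\subseteq S_n$. Combined with the CM-equivalence above, this gives $(1)\Leftrightarrow(2)$.

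\emph{Step 2 ($(2)\Leftrightarrow(3)$).} As $R$-modules $S\cong R\oplus M$. I would first show that for every $n$ the largest $R$-submodule of $R\oplus M$ of dimension $\le n$ is exactly $R_n\oplus M_n$: indeed, given such a submodule $N$, the two projections $\pi_R(N)\subseteq R$ and $\pi_M(N)\subseteq M$ have dimension $\le n$, hence lie in $R_n$ and $M_n$, and $N\subseteq\pi_R(N)\oplus\pi_M(N)\subseteq R_n\oplus M_n$. Writing $\calS(R)\cup\calS(M)=\{c_1<\cdots<c_r\}$, the $i$-th factor of the dimension filtration of $R\oplus M$ is then
$$C_i=\frac{R_{c_i}}{R_{c_{i-1}}}\oplus\frac{M_{c_i}}{M_{c_{i-1}}},$$
each nonzero summand of which is a factor of the dimension filtration of $R$ or of $M$ and has dimension exactly $c_i$. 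Since a direct sum of finitely generated modules of the same dimension is Cohen-Macaulay if and only if every summand is, $R\oplus M$ is sequentially Cohen-Macaulay precisely when every factor of the dimension filtrations of $R$ and of $M$ is Cohen-Macaulay, i.e.~when $R$ and $M$ are both sequentially Cohen-Macaulay.

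The most delicate point is the interleaving in Step 2: when $\calS(R)\ne\calS(M)$ some of the $C_i$ have only one nonzero summand, and one must carefully identify that summand with a genuine factor of the dimension filtration of $R$ or of $M$ (and with the correct dimension $c_i$). The remaining ingredients---stability of the $R$-dimension filtration of $S$ under multiplication by the ideal $(0)\ltimes M$ and the change-of-rings invariance of depth and dimension---both reduce to the elementary observation that multiplication by an ideal cannot enlarge the support of a module.
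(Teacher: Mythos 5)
Your proposal is correct and takes essentially the same route as the paper: $(1)\Leftrightarrow(2)$ is the paper's Theorem \ref{thm2} and Corollary \ref{thm3} on module-finite extensions (your Step 1 re-proves this for $A=R\ltimes M$, replacing the paper's computation of $\dim_A(AM_n)$ via associated primes by the stability of the filtration under the square-zero ideal $(0)\ltimes M$), and $(2)\Leftrightarrow(3)$ is Lemma \ref{lemma1} together with Proposition \ref{prop1} applied to $R\ltimes M\cong R\oplus M$ as $R$-modules, which your Step 2 establishes by a direct identification of the factors rather than by induction on $\sharp\calS(M\oplus N)$. Both variations are sound; the paper's versions are merely stated in greater generality.
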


%%%%%%%%%%%%%%%%%%%%%%%%%%%%%%%%%%%%%%%%%%%%%%%%%%%%%%%%%%%%%%%%%%%%%%%%%%%%%%%%%%%%%%%%%%%%%%%%%%%%%%%%%%%%%%%%%%%%%%%%%%%%%%%%%%%%%%%%%%%%%%%%%%%%%%%%%%%%%%%%%%%%%%%%%%%%%%%%%%%%%%%%%%%%%%%%%%%%%%%%%%%%%%%%%%%%%%%%%%%%%%%%%%%%%%%%%%%%%%%

\section{Localization of sequentially Cohen-Macaulay modules}

The purpose of this section is mainly to prove Theorem \ref{4.1}.

\begin{proof}[Proof of Theorem \ref{4.1}]
$(1) \Rightarrow (2)$~~We may assume that $\ell > 1$ and the assertion holds for $\ell -1$. Thanks to \cite[Proposition 2.6]{CGT}, it is enough to show the case where $P$ is a maximal ideal of $R$. Then we get the exact sequence $0 \to N \to M \to C \to 0$ of $R$-modules where $N = D_{\ell -1}$ and $C = C_{\ell}$. We may also assume $N_P \neq (0)$, $C_P \neq (0)$ and $\dim_{R_P}N_P = \dim_{R_P}M_P$, since $N_P$ is sequentially Cohen-Macaulay and $C_P$ is Cohen-Macaulay.
By using the hypothesis, we get $\dim_{R_P}M_P = \dim_{R_P}C_P$ (see also \cite[Corollary 2.3]{Sch}).

Let
$$
E_0 = (0) \subsetneq E_1 \subsetneq \cdots \subsetneq E_{t-1} \subsetneq E_t = N_P
$$
be the dimension filtration of $N_P$. Then $M_P/E_{t-1}$ is a Cohen-Macaulay $R_P$-module of dimension $\dim_{R_P}M_P$, because $N_P/E_{t-1}$ and $M_P/N_P$ are Cohen-Macaulay $R_P$-modules of same dimension $\dim_{R_P}M_P$. Hence $M_P$ is a sequentially Cohen-Macaulay $R_P$-module.

$(2) \Rightarrow (1)$~~
Suppose that $C_i$ is not Cohen-Macaulay for some $1 \le i \le \ell$. Then there exists a maximal ideal $P$ of $R$ such that $[C_i]_P \neq (0)$, $[C_i]_P$ is not a Cohen-Macaulay $R_P$-module. Let $\alpha = \dim_{R_P}[C_i]_P$. We choose $\p \in \Ass_R C_i$ such that $\p \subseteq P$, $\alpha = \dim R_P/{\p R_P}$. Then 
$$
\alpha = \dim R_P/{\p R_P} = \dim R/{\p} = d_i
$$
by using the hypothesis and $\p \in \Ass_R C_i$. Therefore  $d_i \in \calS(M_P) \subseteq \calS(M)$, since 
$$
\calS(M_P) = \{\dim R/{\p} \mid \p \in \Ass_R M, ~\p \subseteq P\} \subseteq \calS(M).
$$

Let $q = {\sharp}\calS(M_P)$. We write $\calS(M_P) = \{n_1 < n_2 < \cdots < n_q\}$. Then $d_i = n_j$ for some $1 \le j \le q$. Let $(0) = \bigcap_{\p \in \Ass_R M}M(\p)$ be a primary decomposition of $(0)$ in $M$. In this case
$$
(0) = \bigcap_{\p \in \Ass_R M, ~ \p \subseteq P}[M(\p)]_P
$$
 forms a primary decomposition of $(0)$ in $M_P$.
 
\begin{claim}\label{4.3}
The following assertions hold true.
\begin{enumerate}[$(1)$]
\item $[D_i]_P = D_j(M_P)$
\item $[D_{i-1}]_P = D_{j-1} (M_P)$
\end{enumerate}
where $\{D_j (M_P)\}_{0 \le j \le q}$ stands for the dimension filtration of $M_P$.
\end{claim}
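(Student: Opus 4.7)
The plan is to compute both sides as intersections of localized primary components of $(0)$, and then match the two indexing sets using the blanket hypothesis $\dim R/\p = \dim R_P/\p R_P$. Recall the standard description of the dimension filtration via the primary decomposition $(0) = \bigcap_{\p \in \Ass_R M} M(\p)$:
\[
D_i = \bigcap_{\p \in \Ass_R M,\ \dim R/\p > d_i} M(\p),
\]
and analogously, $D_j(M_P)$ is the intersection of those $\p R_P$-primary components of $(0)$ in $M_P$ for which $\dim R_P/\p R_P > n_j$. Since $\Ass_{R_P} M_P = \{\p R_P : \p \in \Ass_R M,\ \p \subseteq P\}$ and $M_P(\p R_P) = [M(\p)]_P$, both filtrations are in the end indexed by primes $\p \subseteq P$ in $\Ass_R M$.

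For part (1) I would localize the formula above. Because $[M(\p)]_P = M_P$ whenever $\p \not\subseteq P$ (any element of $\p \setminus P$ acts locally nilpotently on the $\p$-primary module $M/M(\p)$ and becomes a unit after localizing at $P$), the terms with $\p \not\subseteq P$ drop out, yielding
\[
[D_i]_P = \bigcap_{\p \in \Ass_R M,\ \p \subseteq P,\ \dim R/\p > d_i} [M(\p)]_P.
\]
By the hypothesis $\dim R/\p = \dim R_P/\p R_P$ together with $d_i = n_j$, the condition $\dim R/\p > d_i$ is equivalent to $\dim R_P/\p R_P > n_j$, so the intersection coincides with $D_j(M_P)$.

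For part (2) the same computation gives
\[
[D_{i-1}]_P = \bigcap_{\p \in \Ass_R M,\ \p \subseteq P,\ \dim R/\p > d_{i-1}} [M(\p)]_P,
\]
but here $d_{i-1}$ need not coincide with any $n_k$, so a short combinatorial check is needed. For $\p \subseteq P$ with $\p \in \Ass_R M$, the value $\dim R/\p = \dim R_P/\p R_P$ lies in $\calS(M_P) = \{n_1 < \cdots < n_q\}$. Hence $\dim R/\p > d_{i-1}$ forces $\dim R/\p \geq d_i = n_j$, equivalently $\dim R_P/\p R_P > n_{j-1}$; conversely $\dim R_P/\p R_P > n_{j-1}$ gives $\dim R/\p \geq n_j = d_i > d_{i-1}$. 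The indexing sets agree, and (2) follows.

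The main obstacle is bookkeeping rather than content: one must juggle the two labellings $\{d_i\}$ and $\{n_j\}$ and verify that the strict inequalities translate correctly between them. The translation works precisely because of the assumption $\dim R/\p = \dim R_P/\p R_P$, without which $\calS(M_P)$ could fail to be a subset of $\calS(M)$ and the matching $d_i = n_j$ would have no geometric meaning.
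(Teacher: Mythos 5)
Your proposal is correct and follows essentially the same route as the paper: localize the primary-decomposition description $D_i = \bigcap_{\p \in \Ass_R M,\ \dim R/\p > d_i} M(\p)$, discard the components with $\p \nsubseteq P$, and match the indexing sets via the hypothesis $\dim R/\p = \dim R_P/\p R_P$ together with $d_i = n_j$. The only difference is cosmetic: the paper handles the boundary situations (empty intersection, $j=q$, $j=1$) by explicit case analysis, whereas you absorb them uniformly into the formula.
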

\begin{proof}[Proof of Claim \ref{4.3}]
$(1)$~~We may assume that $i < \ell$. Then 
$$
D_i = \bigcap_{\p \in \Ass_R M, ~\dim R/{\p} > d_i}M(\p),
$$
so that
$$
[D_i]_P = \bigcap_{\p \in \Ass_R M, ~\dim R/{\p} > n_j, ~ \p \subseteq P}[M(\p)]_P.
$$

We now assume that $\p \nsubseteq P$ for every $\p \in \Ass_R M$ with $\dim R/{\p} > d_i$. Then $M(\p)_P = M_P$, so that $[D_i]_P = M_P$. Then $\dim_{R_P}M_P = d_i$, since $\dim_{R_P}M_P \le d_i \in \calS(M_P)$. Therefore $d_i = n_q$, $j=q$ and 
$$
[D_i]_P = M_P = D_q(M_P) = D_j (M_P).
$$
Thus we may assume that $\p \subseteq P$ for some $\p \in \Ass_R M$ with $\dim R/{\p} > d_i$. Thus
$$
n_q = \dim_{R_P}M_P \ge \dim_{R_P}R_P/{\p R_P} = \dim R/{\p} > d_i = n_j
$$
whence $1 \le j < q$. Hence 
$$
[D_i]_P = \bigcap_{\p \in \Ass_R M, ~\dim R_P/{\p R_P} > n_j, ~ \p \subseteq P}[M(\p)]_P = D_j (M_P).
$$

$(2)$~~We get 
$$
[D_{i-1}]_P = \bigcap_{\p \in \Ass_R M, ~ \dim R/{\p} \ge d_i, ~\p \subseteq P}[M(\p)]_P,
$$
since $D_{i-1} = \bigcap_{\p \in \Ass_R M,~\dim R/{\p} \ge d_i}M(\p)$. We may assume $j>1$. Then $d_i = n_j > n_{j-1}$, so that 
\begin{eqnarray*}
[D_{i-1}]_P &=& \bigcap_{\p \in \Ass_R M,~\dim R/{\p} \ge d_i,~\p \subseteq P}[M(\p)]_P = \bigcap_{\p \in \Ass_R M,~\dim R_P/{\p R_P}> n_{j-1}}[M(\p)]_P  \\
                  &=& D_{j-1}(M_P).
\end{eqnarray*}
\end{proof}
 Hence $C_j (M_P) = D_j(M_P)/{D_{j-1}(M_P)}$ is Cohen-Macaulay, since $M_P$ is sequentially Cohen-Macaulay, whence $[C_i]_P$ is a Cohen-Macaulay $R_P$-module, a contradiction.
 \end{proof}

If the base ring $R$ is a finitely generated algebra over a field, then the assumption of Theorem \ref{4.1} is automatically satisfied and we get the following.

\begin{cor}\label{4.4}
Let $R$ be a finitely generated algebra over a field, $M \neq (0)$ a finitely generated $R$-module. Then the following conditions are equivalent.
\begin{enumerate}[$(1)$]
\item $M$ is a sequentially Cohen-Macaulay $R$-module. 
\item $M_P$ is a sequentially Cohen-Macaulay $R_P$-module for every $P \in \Supp_R M$.
\end{enumerate}
\end{cor}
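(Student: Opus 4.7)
The plan is simply to invoke Theorem \ref{4.1}: I would verify that the dimension hypothesis of that theorem, namely $\dim R/\p = \dim R_P/\p R_P$ for every $\p \in \Ass_R M$ and every maximal ideal $P$ of $R$ containing $\p$, holds automatically whenever $R$ is a finitely generated algebra over a field $k$. Once this hypothesis is in place, the equivalence of $(1)$ and $(2)$ is nothing other than Theorem \ref{4.1} applied in this setting, so no further argument is required.

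To verify the dimension equality, I would pass to the quotient $\overline{R} = R/\p$ and $\overline{P} = P/\p$, which reduces the desired identity to $\dim \overline{R} = \height \overline{P}$. Since $\overline{R}$ is a finitely generated $k$-algebra and a domain, and $\overline{P}$ is a maximal ideal of $\overline{R}$, Hilbert's Nullstellensatz gives that $\overline{R}/\overline{P}$ is a finite field extension of $k$, so $\dim \overline{R}/\overline{P} = 0$. The dimension formula for affine domains over a field (every saturated chain of primes from $(0)$ to a given maximal ideal has the same length) then yields $\height \overline{P} + \dim \overline{R}/\overline{P} = \dim \overline{R}$, hence $\height \overline{P} = \dim \overline{R}$, as desired.

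The only substantive input is the dimension formula for affine domains, which is a standard consequence of Noether normalization together with the going-up theorem (see, for instance, Matsumura's \emph{Commutative Ring Theory}). There is essentially no obstacle beyond recognizing that the hypothesis of Theorem \ref{4.1} is subsumed by the catenarity and equidimensionality properties enjoyed by finitely generated algebras over a field.
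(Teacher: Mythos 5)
Your proposal is correct and is exactly the argument the paper intends: the paper's ``proof'' of Corollary \ref{4.4} consists solely of the remark that the hypothesis of Theorem \ref{4.1} is automatic for affine algebras over a field, and your verification via the dimension formula $\height \overline{P} + \dim \overline{R}/\overline{P} = \dim \overline{R}$ for the affine domain $\overline{R}=R/\p$ is the standard way to make that remark precise. No difference in approach worth noting.
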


From now on, let us explore the localization property for the graded rings. Let $R=\sum_{n \in \Bbb Z}R_n$ be a Noetherian $\Bbb Z$-graded ring such that $R$ is an $H$-local ring with an $H$-maximal ideal $P$ of $R$. Let $M \neq (0)$ be a finitely generated graded $R$-module of dimension $d$. 
Let $\{D_i\}_{0 \le i \le \ell}$ be the dimension filtration of $M$. We put $q = \dim R/{P}$.
For an arbitrary ideal $I$ of a graded ring, $I^*$ stands for the ideal generated by every homogeneous elements in $I$.

\begin{lem}\label{4.5}
$\dim_R M = \dim_{R_P}M_P + q$. Therefore $\dim_R M = \dim_{R_{\m}}M_{\m}$  for every maximal ideal $\m$ of $R$ with $\m \supseteq P$. 
\end{lem}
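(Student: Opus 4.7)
My plan is to reduce both sides of the claimed identity to maxima over $\Ass_R M$ and then invoke a standard dimension formula for $H$-local graded rings. Concretely, I would begin by writing $\dim_R M = \max\{\dim R/\mathfrak{p} \mid \mathfrak{p} \in \Ass_R M\}$ and $\dim_{R_P} M_P = \max\{\dim R_P/\mathfrak{p} R_P \mid \mathfrak{p} \in \Ass_R M,\ \mathfrak{p} \subseteq P\}$. Since $M$ is a graded $R$-module, every associated prime of $M$ is graded, and every graded prime of $R$ is contained in the unique $H$-maximal ideal $P$. Hence the two maxima range over the same set $\Ass_R M$, and the problem is reduced to comparing $\dim R/\mathfrak{p}$ with $\dim R_P/\mathfrak{p} R_P = \height(P/\mathfrak{p})$ for an arbitrary graded prime $\mathfrak{p} \subseteq P$.

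The crucial input is then the dimension formula $\dim R/\mathfrak{p} = \height(P/\mathfrak{p}) + \dim R/P$ valid for every graded prime $\mathfrak{p} \subseteq P$ in an $H$-local $\mathbb{Z}$-graded Noetherian ring; this is a standard fact in graded-ring theory (essentially Bruns--Herzog, Theorem 1.5.8). The argument passes to the graded domain $R/\mathfrak{p}$, in which $P/\mathfrak{p}$ remains $H$-maximal, and observes that any saturated chain of primes from $\mathfrak{p}$ to a maximal ideal splits through $P$: the graded segment below $P$ has length $\height(P/\mathfrak{p})$, and the segment above $P$ contributes exactly $\dim R/P$, because a non-graded prime lying strictly above a graded prime has height at most one over its $*$-closure. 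Once the formula is in hand, taking the maximum over $\mathfrak{p} \in \Ass_R M$ yields $\dim_R M = \dim_{R_P} M_P + q$ at once.

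For the ``therefore'' clause, I would treat a maximal ideal $\mathfrak{m}\supseteq P$ by the same reduction, namely $\dim_{R_\mathfrak{m}} M_\mathfrak{m} = \max_{\mathfrak{p} \in \Ass_R M} \height(\mathfrak{m}/\mathfrak{p})$. The key point is that the uniqueness of $P$ as an $H$-maximal ideal forces $R/P$ to have $(0)$ as its sole graded prime, so every maximal ideal $\mathfrak{m}/P$ of $R/P$ has height equal to $\dim R/P = q$. Combining this with the graded dimension formula gives $\height(\mathfrak{m}/\mathfrak{p}) \ge \height(P/\mathfrak{p}) + q = \dim R/\mathfrak{p}$, and since the reverse inequality is automatic, equality holds; taking the max over $\mathfrak{p} \in \Ass_R M$ then recovers $\dim_{R_\mathfrak{m}} M_\mathfrak{m} = \dim_R M$.

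The main obstacle is the graded dimension formula invoked in the second paragraph; specifically one must control the non-graded primes that may appear above $P$ in a maximal chain and show that their total contribution is exactly $\dim R/P$ rather than more. This is precisely where the $H$-local hypothesis does the work, and care is required to make that reduction rigorous; the remaining steps of the argument are routine bookkeeping with associated primes and heights.
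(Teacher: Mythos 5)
Your proof is correct and rests on the same ingredients as the paper's: every associated prime of $M$ is graded and hence contained in $P$, and the height of a non-graded prime exceeds that of its $*$-closure by exactly one (Bruns--Herzog, Theorem 1.5.8), which is what forces $q \le 1$ and gives the dimension formula. The only organizational difference is that you isolate the prime-wise identity $\dim R/\p = \height(P/\p) + q$ and then take maxima, whereas the paper argues the two inequalities directly with extremal choices of $\p$ and $\m$; your version also makes explicit the point needed for the ``therefore'' clause (every maximal ideal of $R/P$ has height $q$), which the paper leaves implicit.
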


\begin{proof}
We may assume $q > 0$ (thus $q = 1$).  Let $\m$ be a maximal ideal of $R$ such that $\m \supseteq P$ and $\dim R_{\m}/P R_{\m} = 1$. Let $\p \in \Ass_R M$ such that $P \supseteq \p$ and $\dim R_P/\p R_P = \dim_{R_P}M_P$. Then we have $\dim_R M \ge \dim_{R_P}M_P + 1$. Conversely, we choose $\p \in \Ass_R M$ and a maximal ideal $\m$ of $R$ such that $\p \subseteq \m$, $\dim R_{\m}/\p R_{\m} = d$. Since $\p$ is a graded ideal of $R$, $\p \subseteq \m^*$. Notice that $\m$ is not a graded ideal of $R$ because $q=1$. Hence we get $\m^* \subseteq P$ and
$$
d = \dim R_{\m}/\p R_{\m} = \dim R_{\m^*}/\p R_{\m^*} +1 \le \dim  R_P/ {\p R_P} + 1 \le \dim_{R_P}M_P + 1. 
$$
\end{proof}

Apply \cite[Theorem 2.3]{GHS} and Lemma \ref{4.5}, we get the following.

\begin{cor}\label{4.6}
The following assertions hold true.
\begin{enumerate}[$(1)$]
\item $[D_0]_\m = (0) \subsetneq [D_1]_\m \subsetneq \ldots \subsetneq [D_{\ell}]_\m = M_\m$ is the dimension filtration of $M_\m$ for every maximal ideal $\m$ of $R$ such that $\m \supseteq P$.
\item $[D_0]_P = (0) \subsetneq [D_1]_P \subsetneq \ldots \subsetneq [D_{\ell}]_P = M_P$ is the dimension filtration of $M_P$, so that $M$ is a sequentially Cohen-Macaulay $R$-module if and only if $M_P$ is a sequentially Cohen-Macaulay $R_P$-module.
\end{enumerate}
\end{cor}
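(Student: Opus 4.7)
The plan is to verify both parts by checking that the localized chains $\{[D_i]_\m\}$ and $\{[D_i]_P\}$ satisfy the intrinsic characterization of a dimension filtration provided by \cite[Theorem 2.3]{GHS}, using Lemma \ref{4.5} as the tool that transports dimensions across the localization. The guiding principle is that the $H$-local graded hypothesis forces every $\p \in \Ass_R M$ to be a graded prime contained in $P$ (hence in any maximal $\m \supseteq P$), so no associated primes are lost upon localization.

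First I would note that each $D_i$ is a graded submodule of $M$, since being the largest submodule with a dimension bound is a condition on graded associated primes. Applying Lemma \ref{4.5} to $R/\p$ for each $\p \in \Ass_R M$ yields $\dim R/\p = \dim R_\m/\p R_\m = \dim R_P/\p R_P$, whence $\calS(M) = \calS(M_\m) = \calS(M_P) = \{d_1 < \cdots < d_\ell\}$. Combined with the standard identities
\[
\Ass_R D_i = \{\p \in \Ass_R M : \dim R/\p \le d_i\}, \qquad \Ass_R M/D_i = \{\p \in \Ass_R M : \dim R/\p > d_i\},
\]
localization preserves these associated-prime descriptions (using that $\p \subseteq \m$ for all such $\p$). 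Therefore $[D_i]_\m$ is the largest $R_\m$-submodule of $M_\m$ of dimension $\le d_i$, and by \cite[Theorem 2.3]{GHS} the chain $\{[D_i]_\m\}_{0\le i\le\ell}$ is the dimension filtration of $M_\m$. The same argument with $P$ replacing $\m$ handles (2), giving both (1) and the filtration part of (2).

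For the sequentially Cohen-Macaulay equivalence in (2), localization commutes with quotients, so $[C_i]_P = [D_i]_P/[D_{i-1}]_P$; hence $M_P$ is sequentially Cohen-Macaulay iff each $[C_i]_P$ is Cohen-Macaulay over $R_P$. The main obstacle I anticipate is the backward implication: deducing that $C_i$ itself is Cohen-Macaulay from the Cohen-Macaulayness of its localization at $P$. This should follow from the well-known principle that, for a finitely generated graded module over an $H$-local graded ring, Cohen-Macaulayness is detected at the unique $H$-maximal ideal (the depth and the dimension each transfer faithfully to $R_P$), together with Lemma \ref{4.5} aligning $\dim_R C_i$ with $\dim_{R_P}[C_i]_P$. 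Granting this equivalence applied to each graded module $C_i$, the equivalence in (2) is immediate, completing the proof.
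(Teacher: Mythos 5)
Your proposal follows essentially the same route as the paper, whose entire argument for this corollary is the one--line instruction ``Apply \cite[Theorem 2.3]{GHS} and Lemma \ref{4.5}'': one checks that localization at $P$ (or at $\m \supseteq P$) loses no associated primes, since all $\p \in \Ass_RM$ are graded and hence contained in $P$, and then transports the characterization of the $D_i$ via Lemma \ref{4.5}. One bookkeeping slip: Lemma \ref{4.5} applied to $R/\p$ gives $\dim R/\p = \dim R_\m/\p R_\m$ but only $\dim R/\p = \dim R_P/\p R_P + q$, so when $q=1$ you have $\calS(M_P) = \{d_1 - q < \cdots < d_\ell - q\} \neq \calS(M)$, and $[D_i]_P$ is the largest submodule of $M_P$ of dimension $\le d_i - q$ rather than $\le d_i$ (indeed, if $d_{i+1}=d_i+1$ the largest submodule of dimension $\le d_i$ would be $[D_{i+1}]_P$). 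Since the shift by $q$ is uniform over all associated primes, the chain $\{[D_i]_P\}$ is still the dimension filtration of $M_P$ and the argument goes through after this correction. The remaining ingredient for the equivalence in (2) --- that Cohen--Macaulayness of a finitely generated graded module over an $H$-local graded ring is detected at the $H$-maximal ideal --- is the standard fact you cite, and is also what the paper implicitly relies on.
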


\if0
\begin{rem}\label{4.7}
Suppose that $P$ is a maximal ideal of $R$. Then $[D_0]_P = (0) \subsetneq [D_1]_P \subsetneq \ldots \subsetneq [D_{\ell}]_P = M_P$ is the dimension filtration of $M_P$, so that $M$ is a sequentially Cohen-Macaulay $R$-module if and only if $M_P$ is a sequentially Cohen-Macaulay $R_P$-module.
\end{rem}
\fi

Finally we reach the goal of this section.

\begin{thm}\label{4.8}
The following conditions are equivalent.
\begin{enumerate}[$(1)$]
\item $M$ is a sequentially Cohen-Macaulay $R$-module.
\item $M_P$ is a sequentially Cohen-Macaulay $R_P$-module.
\end{enumerate}
When this is the case, $M_\p$ is a sequentially Cohen-Macaulay $R_{\p}$-module for every $\p \in \Supp_R M$.
\end{thm}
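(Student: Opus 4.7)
The equivalence $(1) \Leftrightarrow (2)$ is precisely the content of Corollary \ref{4.6}(2); the substance of Theorem \ref{4.8} is the concluding assertion, that $M_\p$ is sequentially Cohen-Macaulay for every $\p \in \Supp_R M$. The plan is to reduce each such localization to a localization at a maximal ideal of $R$, where Corollary \ref{4.6}(1) and Theorem \ref{4.1} can be brought to bear.

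Fix $\p \in \Supp_R M$ and pick a maximal ideal $\m$ of $R$ with $\p \subseteq \m$. It suffices to show that $M_\m$ is sequentially Cohen-Macaulay as an $R_\m$-module, since then Theorem \ref{4.1} applied in the local ring $R_\m$ (whose dimension-formula hypothesis is automatic over a local base) yields that $M_\p = (M_\m)_{\p R_\m}$ is sequentially Cohen-Macaulay. When $\m \supseteq P$, this is immediate from Corollary \ref{4.6}(1): the dimension filtration of $M_\m$ is $\{[D_i]_\m\}_{0 \le i \le \ell}$, whose successive quotients $[C_i]_\m$ are Cohen-Macaulay as localizations of the Cohen-Macaulay modules $C_i$.

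The main obstacle is the remaining case $\m \not\supseteq P$, in which $\m$ must be non-graded and Corollary \ref{4.6}(1) no longer applies directly. My plan here is to exploit that each $D_i$ is a graded submodule of $M$ (by \cite[Theorem 2.3]{GHS}), so $\{[D_i]_\m\}$ is still a chain of submodules of $M_\m$ with Cohen-Macaulay successive quotients; the remaining task is to identify this chain, after deleting equalities, with the dimension filtration of $M_\m$. The dimension comparison required for this identification should follow by applying Lemma \ref{4.5} to each graded $H$-local quotient ring $R/\Ann_R C_i$, aligning $\dim_{R_\m}[C_i]_\m$ with the elements of $\calS(M_\m)$ in the correct order; this is the step I expect to require the most care, since it must be verified for the non-graded maximal ideals that escape the scope of Corollary \ref{4.6}.
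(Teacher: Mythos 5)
Your reduction of the equivalence to Corollary \ref{4.6}(2), your reduction of the final assertion to maximal ideals via \cite[Proposition 2.6]{CGT} (equivalently, Theorem \ref{4.1} over the local ring $R_\m$), and your treatment of the case $\m \supseteq P$ all match the intended argument. The gap is exactly where you flag it: the case $\m \not\supseteq P$. Your plan there rests on identifying $\{[D_i]_\m\}_i$, after deleting repetitions, with the dimension filtration of $M_\m$, but this identification requires that the function $\p \mapsto \dim R_\m/\p R_\m$ on the graded primes $\p \in \Ass_RM$ contained in $\m$ induce the same ordered partition as $\p \mapsto \dim R/\p$. For such $\p$ one has $\p \subseteq \m^* \subsetneq P$ and $\dim R_\m/\p R_\m = \height_{R/\p}(\m^*/\p)+1$, and nothing in the standing hypotheses (only $H$-locality) forces $\height_{R/\p}(\m^*/\p)$ to differ from $\dim R/\p$ by a constant independent of $\p$; that is precisely the kind of dimension formula that Theorem \ref{4.1} must \emph{assume}. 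Moreover, the tool you propose for this comparison, Lemma \ref{4.5}, does not reach this case: its second assertion is confined to maximal ideals containing the $H$-maximal ideal, and passing to $R/\Ann_RC_i$ does not help, since the image of $P$ is again the $H$-maximal ideal there and is still not contained in the image of $\m$. So the step you defer is not a routine verification; it is the crux, and as formulated it may simply be false without extra hypotheses.

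The paper's proof sidesteps the problematic maximal ideals altogether by changing the ambient graded ring before localizing at $\p$. For $\p \in \Supp_RM$ one has $\p^* \subseteq P$, so $M_{\p^*} = (M_P)_{\p^*R_P}$ is sequentially Cohen--Macaulay by \cite[Proposition 2.6]{CGT} applied over the local ring $R_P$. The homogeneous localization $R_{(\p)}$ is $H$-local with $H$-maximal ideal $\p^*R_{(\p)}$, so Corollary \ref{4.6}(2) (applied to $R_{(\p)}$) upgrades this to: $M_{(\p)}$ is sequentially Cohen--Macaulay over $R_{(\p)}$. If $\p$ is not graded, then $\p R_{(\p)}$ is a \emph{maximal} ideal of $R_{(\p)}$ containing the $H$-maximal ideal $\p^*R_{(\p)}$, so Corollary \ref{4.6}(1) now applies and gives that $M_\p$ is sequentially Cohen--Macaulay over $R_\p$. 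In other words, instead of choosing a maximal ideal $\m$ of $R$ over $\p$ (which may fail to contain $P$), one arranges for $\p$ itself to become a maximal ideal lying over the new $H$-maximal ideal; this is the idea your proposal is missing.
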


\begin{proof}
The equivalence of conditions $(1)$ and $(2)$ follows from Corollary \ref{4.6}. Let us make sure of the last assertion. Note that $\p^* \subseteq P$ for any $\p \in \Supp_R M$. Thanks to \cite[Proposition 2.6]{CGT}, $M_{\p^*}$ is a sequentially Cohen-Macaulay $R_{\p^*}$-module. Since $\p^* R_{(\p)}$ is an $H$-maximal ideal of the homogeneous localization $R_{(\p)}$ of $R$, $M_{(\p)}$ is a sequentially Cohen-Macaulay $R_{(\p)}$-module. We may assume that $\p$ is not a graded ideal of $R$, so that $\p R_{(\p)}$ is a maximal ideal of $R_{(\p)}$. Therefore $M_\p$ is a sequentially Cohen-Macaulay $R_\p$-module.
\end{proof}

\if0
\begin{rem}\label{8.8}
Let $R$ be a Noetherian ring, $M \neq (0)$ a finitely generated $R$-module with $\dim_R M < \infty$. Let $R[t]$ be a polynomial ring over $R$. We set $S =R[t]$ or $R[t, t^{-1}]$. Then the following conditions are equivalent.
\begin{enumerate}[$(1)$]
\item $M$ is a sequentially Cohen-Macaulay $R$-module.
\item $S\otimes_R M$ is a sequentially Cohen-Macaulay $S$-module.
\end{enumerate} 
\end{rem}
\fi

%%%%%%%%%%%%%%%%%%%%%%%%%%%%%%%%%%%%%%%%%%%%%%%%%%%%%%%%%%%%%%%%%%%%%%%%%%%

\section{Module-finite extension of sequentially Cohen-Macaulay modules}

In this section, we assume that $R$ is a local ring with maximal ideal $\m$. Remember that $M_n$ stands for the largest $R$-submodule of $M$ with $\dim_RM_n \le n$ for each $n \in \Bbb Z$. 

We note the following.

\begin{lem}\label{lemma1}
Let $M$ and $N$ be finitely generated $R$-modules. Then $[M \oplus N]_n = M_n \oplus N_n$ for every $n \in \Bbb Z$.
\end{lem}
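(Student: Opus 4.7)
The plan is to prove the two inclusions $M_n \oplus N_n \subseteq [M \oplus N]_n$ and $[M \oplus N]_n \subseteq M_n \oplus N_n$ separately, using only the defining universal property of the submodule $L_n \subseteq L$ as the largest submodule of dimension at most $n$.

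First I would check the easy inclusion. The submodule $M_n \oplus N_n$ of $M \oplus N$ satisfies
\[
\dim_R(M_n \oplus N_n) = \max\{\dim_R M_n, \dim_R N_n\} \le n,
\]
so by the maximality of $[M \oplus N]_n$ among submodules of $M \oplus N$ with dimension $\le n$, we get $M_n \oplus N_n \subseteq [M \oplus N]_n$.

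For the reverse inclusion, I would take an arbitrary submodule $L \subseteq M \oplus N$ with $\dim_R L \le n$ and show $L \subseteq M_n \oplus N_n$ (applying this to $L = [M \oplus N]_n$ then finishes the proof). Let $\pi_M \colon M \oplus N \to M$ and $\pi_N \colon M \oplus N \to N$ denote the canonical projections. Since $\pi_M(L)$ and $\pi_N(L)$ are homomorphic images of $L$, we have $\dim_R \pi_M(L) \le \dim_R L \le n$ and $\dim_R \pi_N(L) \le \dim_R L \le n$. By the maximality of $M_n$ and $N_n$, this forces $\pi_M(L) \subseteq M_n$ and $\pi_N(L) \subseteq N_n$. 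For any $(x,y) \in L$ we have $x = \pi_M(x,y) \in M_n$ and $y = \pi_N(x,y) \in N_n$, so $(x,y) \in M_n \oplus N_n$. Thus $L \subseteq M_n \oplus N_n$.

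There is no real obstacle here: the whole argument rests on the elementary facts that $\dim$ does not increase under quotients and that $L \subseteq \pi_M(L) \oplus \pi_N(L)$ for any submodule $L$ of a direct sum. The only subtlety worth mentioning is the existence of the largest such submodule, which is already built into the notation $(-)_n$ introduced at the start of the paper.
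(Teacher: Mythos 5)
Your proof is correct and follows essentially the same route as the paper: the easy inclusion via $\dim_R(M_n\oplus N_n)=\max\{\dim_R M_n,\dim_R N_n\}\le n$, and the reverse inclusion by projecting onto each factor and using that dimension does not increase under homomorphic images. The only cosmetic difference is that you argue for an arbitrary submodule $L$ of dimension $\le n$ and then specialize to $[M\oplus N]_n$, whereas the paper applies the projections to $[M\oplus N]_n$ directly.
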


\begin{proof}
We have $[M \oplus N]_n \supseteq M_n \oplus N_n$, since $\dim_R(M_n\oplus N_n) = \max\{\dim_RM_n, \dim_RN_n\} \le n$. Let $p: L = M\oplus N \to M, (x,y) \mapsto x$ be the first projection. Then $p(L_n) \subseteq M_n$, since $\dim_Rp(L_n) \le \dim_RL_n \le n$. We similarly have $q(L_n) \subseteq N_n$, where $q: M\oplus N \to N, (x,y) \mapsto y$ denotes the second projection. Hence $[M\oplus N]_n \subseteq M_n \oplus N_n$ as claimed.
\end{proof}

The following proposition includes the result \cite[Proposition 4.5]{CN}.

\begin{prop}\label{prop1} Let $M$ and $N$ $(M, N \ne (0))$ be  finitely generated $R$-modules. Then $M\oplus N$ is a sequentially Cohen-Macaulay $R$-module if and only if both $M$ and $N$ are sequentially Cohen-Macaulay $R$-modules.
\end{prop}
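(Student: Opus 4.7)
My plan is to combine Lemma \ref{lemma1} with the identity
$$
\frac{[M \oplus N]_n}{[M \oplus N]_{n-1}} \;=\; \frac{M_n}{M_{n-1}} \oplus \frac{N_n}{N_{n-1}},
$$
which is valid for every $n \in \Bbb Z$, and to reformulate the definition of sequential Cohen-Macaulayness as follows: a finitely generated $R$-module $X$ is sequentially Cohen-Macaulay if and only if for every $n \in \Bbb Z$ the quotient $X_n/X_{n-1}$ is either zero or Cohen-Macaulay of dimension exactly $n$. This reformulation holds because every nonzero submodule of $X$ has dimension in $\calS(X)$, so if $\calS(X) = \{d_1 < \cdots < d_\ell\}$ then $X_n = X_{n-1}$ when $n \notin \calS(X)$, while $X_{d_j} = D_j$ and $X_{d_j - 1} = D_{j-1}$; hence the nonzero quotients $X_n/X_{n-1}$ are precisely the $C_j = D_j/D_{j-1}$, each of dimension $d_j$.

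Given this reformulation, the direction ``both $M$ and $N$ sequentially Cohen-Macaulay $\Rightarrow M \oplus N$ sequentially Cohen-Macaulay'' is essentially immediate: for each $n$, the summands $M_n/M_{n-1}$ and $N_n/N_{n-1}$ are each zero or Cohen-Macaulay of dimension $n$, so their direct sum is zero, a single Cohen-Macaulay module of dimension $n$, or the direct sum of two Cohen-Macaulay modules of common dimension $n$, hence zero or Cohen-Macaulay of dimension $n$.

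For the converse, assume $M \oplus N$ is sequentially Cohen-Macaulay and fix $n \in \Bbb Z$. Set $A = M_n/M_{n-1}$ and $B = N_n/N_{n-1}$; note $\dim A, \dim B \le n$ by the defining property of $M_n$ and $N_n$. If $A \oplus B = (0)$, both summands vanish. Otherwise, by hypothesis $A \oplus B$ is Cohen-Macaulay of dimension $n$; using the standard identities $\dim(A \oplus B) = \max\{\dim A,\dim B\}$ and $\depth(A \oplus B) = \min\{\depth A, \depth B\}$ together with $\depth X \le \dim X$ for any $X \neq (0)$, a short chase forces any nonzero summand to satisfy $\depth = \dim = n$, i.e.\ to be Cohen-Macaulay of dimension $n$. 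Hence both $M$ and $N$ are sequentially Cohen-Macaulay. The only genuine subtlety is justifying the pointwise reformulation of sequential Cohen-Macaulayness—in particular the equality $X_{d_j - 1} = D_{j-1}$—which is the step I would write out with most care; the rest is a direct deployment of Lemma \ref{lemma1} and elementary depth/dimension bookkeeping.
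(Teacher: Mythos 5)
Your proof is correct, but it is organized differently from the paper's. The paper argues by induction on $\ell = \sharp\calS(M\oplus N)$: it identifies the bottom layer $D_1$ of the dimension filtration of $L=M\oplus N$ as $M_{d_1}\oplus N_{d_1}$ via Lemma \ref{lemma1} (with a three-way case analysis according to whether $d_1$ lies in $\calS(M)\setminus\calS(N)$, $\calS(M)\cap\calS(N)$, or $\calS(N)\setminus\calS(M)$), uses the equivalence ``$L$ sequentially Cohen--Macaulay $\iff$ $D_1$ Cohen--Macaulay and $L/D_1$ sequentially Cohen--Macaulay,'' and passes to $L/D_1$. You instead avoid induction altogether by first proving the pointwise characterization ``$X$ is sequentially Cohen--Macaulay iff $X_n/X_{n-1}$ is zero or Cohen--Macaulay of dimension $n$ for all $n$,'' and then applying Lemma \ref{lemma1} at $n$ and $n-1$ simultaneously. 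The two ingredients you rightly single out as needing care are exactly the ones that make this work: the equality $X_{d_j-1}=D_{j-1}$, which follows from the paper's own observation that every nonzero submodule of $X$ has dimension in $\calS(X)=\{\dim R/\p \mid \p\in\Ass_RX\}$, and the fact that the nonzero quotients $D_j/D_{j-1}$ have dimension exactly $d_j$ (standard for the dimension filtration, cf.\ \cite{Sch}). Your converse, via $\dim(A\oplus B)=\max\{\dim A,\dim B\}$ and $\depth(A\oplus B)=\min\{\depth A,\depth B\}$, is the same elementary bookkeeping the paper leaves implicit in the phrase ``the hypothesis on $\ell$ readily shows.'' What your route buys is a cleaner, case-free and induction-free argument that makes the hidden depth/dimension chase explicit; what the paper's route buys is that it stays entirely within the filtration formalism already set up for Theorem \ref{4.1} and needs no separate reformulation lemma.
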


\begin{proof} We set $L = M \oplus N$ and $\ell = \sharp \calS (L)$. Then $\calS (L) = \calS (M) \cup \calS (N)$, as $\Ass_RL = \Ass_RM \cup \Ass_RN$. Hence if $\ell = 1$, then $\calS (L) = \calS (M) = \calS (N)$ and $\dim_RL = \dim_RM = \dim_RN$. Therefore when $\ell = 1$, $L$ is a sequentially Cohen-Macaulay $R$-module if and only if $L$ is a Cohen-Macaulay $R$-module, and the second condition is equivalent to saying that the $R$-modules $M$ and $N$ are Cohen-Macaulay, that is $M$ and $N$ are sequentially Cohen-Macaulay $R$-modules. Suppose that $\ell > 1$ and that our assertion holds true for $\ell - 1$. Let 
$$
D_0=(0) \subsetneq D_1 \subsetneq  D_2 \subsetneq  \cdots \subsetneq D_{\ell} = L
$$
be the dimension filtration of $L = M \oplus N$, where $\calS (L) = \{d_1< d_2 < \cdots < d_\ell\}$. Then $\{D_i/D_1\}_{1 \le i \le \ell}$ is the dimension filtration of $L/D_1$ and hence $L$ is a sequentially Cohen-Macaulay $R$-module if and only if $D_1$ is a Cohen-Macaulay $R$-module and $L/D_1$ is a sequentially Cohen-Macaulay $R$-module. Because 
$$
D_1 = \left\{
\begin{array}{lc}
M_{d_1} \oplus (0) & (d_1 \in \calS (M) \setminus \calS (N)),\\
\vspace{0.5mm}\\
M_{d_1} \oplus N_{d_1} & (d_1 \in \calS (M) \cap \calS (N)),\\
\vspace{0.5mm}\\
(0) \oplus N_{d_1} & (d_1 \in \calS (N) \setminus \calS (M))
\end{array}
\right.
$$ by Lemma \ref{lemma1},  the hypothesis on $\ell$ readily shows  the second condition is equivalent to saying that the $R$-modules $M$ and $N$ are sequentially Cohen-Macaulay.
\end{proof}

In what follows, let $A$ be a Noetherian local ring and assume that $A$ is a module-finite algebra over $R$. 

The main result of this section is the following.

\begin{thm} \label{thm2}
Let $M\neq (0)$ be a finitely generated $A$-module. Then the following assertions hold true.
\begin{enumerate}
\item[$(1)$] $M_n$ is the largest $A$-submodule of $M$ with $\dim_AM_n \le n$ for every $n \in \Bbb Z$.
\item[$(2)$] The dimension filtration of $M$ as an $A$-module coincides with that of $M$ as an $R$-module.
\item[$(3)$] $M$ is a sequentially Cohen-Macaulay $A$-module if and only if $M$ is a sequentially Cohen-Macaulay $R$-module.
\end{enumerate}
\end{thm}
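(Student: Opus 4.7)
The plan is to prove (1) directly, deduce (2) from (1) together with the invariance of associated primes under module-finite extensions, and obtain (3) from (2) by showing that Cohen--Macaulayness of a finitely generated $A$-module is indifferent to whether the module is viewed over $A$ or over $R$. The only conceptual step is (1); after that, (2) and (3) reduce to standard module-finite invariance of $\Ass$, $\dim$ and $\depth$.

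For part (1), the key observation is that the \emph{a priori} $R$-submodule $M_n$ is automatically stable under the action of $A$: for each $a \in A$, the image $aM_n$ is an $R$-linear quotient of $M_n$, hence $\dim_R(aM_n) \le \dim_R M_n \le n$, and by the maximality defining $M_n$ we get $aM_n \subseteq M_n$. Thus $M_n$ is an $A$-submodule. Since $A$ is module-finite over $R$, any finitely generated $A$-submodule $N \subseteq M$ satisfies $\dim_A N = \dim_R N$ by integrality of $A/\Ann_A N$ over $R/\Ann_R N$. Consequently $\dim_A M_n \le n$, while any $A$-submodule $L \subseteq M$ with $\dim_A L \le n$ is an $R$-submodule with $\dim_R L \le n$ and is therefore contained in $M_n$.

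For part (2), I would show $\calS_A(M) = \calS_R(M)$ using the identity $\calS(M) = \{\dim(\text{ring})/\mathfrak{q} \mid \mathfrak{q} \in \Ass(M)\}$ recalled in the introduction, together with the standard fact that the contraction $\mathfrak{P} \mapsto \mathfrak{P} \cap R$ gives a surjection $\Ass_A M \twoheadrightarrow \Ass_R M$ and that $\dim A/\mathfrak{P} = \dim R/(\mathfrak{P}\cap R)$ by integrality. Combined with (1), the indices $d_i$ and submodules $D_i = M_{d_i}$ are then identical in both filtrations, which is (2).

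For part (3), by (2) the quotients $C_i = D_i/D_{i-1}$ are the same under either viewpoint, so it suffices to show that a finitely generated $A$-module $N$ is Cohen--Macaulay over $A$ iff over $R$. Dimensions agree as in (1). For depths, the decisive point is that $\sqrt{\m A} = \m_A$: since $A/\m A$ is a module-finite algebra over the field $R/\m$, it is Artinian local, so its maximal ideal $\m_A/\m A$ is nilpotent. Therefore $H^i_{\m}(N) = H^i_{\m_A}(N)$ for all $i$ (local cohomology of an $A$-module depends only on the radical of the defining ideal, computed inside $A$), so $\depth_R N = \depth_A N$ and the Cohen--Macaulay property transfers as claimed.
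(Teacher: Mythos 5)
Your proposal is correct and follows essentially the same route as the paper: establish (1) by identifying $M_n$ with the largest $A$-submodule of dimension at most $n$ using $\dim_A=\dim_R$ for finitely generated $A$-modules, then deduce (2) immediately and get (3) from the invariance of dimension and depth under the module-finite extension. The only cosmetic difference is in (1): you show $aM_n\subseteq M_n$ directly because $aM_n$ is an $R$-homomorphic image of $M_n$, whereas the paper bounds $\dim_R(AM_n)$ by inspecting $\Ass_R(AM_n)$ via localization; both arguments amount to proving $AM_n=M_n$.
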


\begin{proof}
Let $n \in \Bbb Z$ and $X$ denote the largest $A$-submodule of $M$ with $\dim_AX \le n$. Then $X \subseteq M_n$, since $\dim_RX =\dim_AX \le n$. Let $Y = AM_n$. Then $\dim_AY \le n$. In fact, let $\p \in \Ass_RY$. Then since $[M_n]_\p \subseteq Y_\p = A_\p{\cdot} [M_n]_\p \subseteq M_\p$, we see $[M_n]_\p \ne (0)$, so that $\p \in \Supp_RM_n$. Hence $\dim R/\p \le \dim_RM_n \le n$. Thus $\dim_AY = \dim_RY \le n$, whence $M_n \subseteq Y \subseteq X$, which shows $X = M_n$. Therefore assertions (1) and  (2) follows. Since $\dim_AL = \dim_RL$ and $\depth_AL = \depth_RL$ for every finitely generated $A$-module $L$, we get assertion (3).
\end{proof}

We summarize some consequences.

\begin{cor}\label{thm3} $A$ is a sequentially Cohen-Macaulay local ring if and only if $A$ is a sequentially Cohen-Macaulay $R$-module.
\end{cor}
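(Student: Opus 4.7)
The plan is to deduce this corollary as an immediate specialization of Theorem \ref{thm2} to the module $M = A$, viewed as a finitely generated module over itself. First I would observe that $A$ is nonzero and, being a Noetherian local ring, has finite Krull dimension; moreover $A$ is a finitely generated $R$-module by hypothesis, so the setup of Theorem \ref{thm2} applies to $M = A$.

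Next I would invoke part $(3)$ of Theorem \ref{thm2} with $M = A$, which yields that $A$ is a sequentially Cohen-Macaulay $A$-module if and only if $A$ is a sequentially Cohen-Macaulay $R$-module. By the definition given in the introduction, the former condition is precisely the statement that $A$ is a sequentially Cohen-Macaulay local ring (the finite-dimensionality clause in the definition being automatic for Noetherian local rings). This is the desired equivalence.

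There is essentially no obstacle here: part $(2)$ of Theorem \ref{thm2} guarantees that the dimension filtration of $A$ does not depend on whether one views $A$ as an $A$-module or as an $R$-module, and part $(3)$ transports the Cohen-Macaulayness of each successive quotient between the two viewpoints. In particular, there is nothing to check beyond recognising that the ring-theoretic definition and the module-theoretic definition of ``sequentially Cohen-Macaulay'' coincide when the module in question is the ring itself. Hence the corollary follows with no further work.
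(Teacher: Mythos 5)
Your proof is correct and is exactly the argument the paper intends: Corollary \ref{thm3} is stated as an immediate consequence of Theorem \ref{thm2}(3) applied to $M = A$, with the observation that sequential Cohen--Macaulayness of $A$ as a ring is by definition sequential Cohen--Macaulayness of $A$ as a module over itself (finite dimension being automatic for a Noetherian local ring). No further comment is needed.
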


\begin{cor}\label{cor1}
Let $M$ be a finitely generated $A$-module. Suppose that $R$ is a direct summand of $M$ as an $R$-module. If $M$ is a sequentially Cohen-Macaulay $A$-module, then $R$ is a sequentially Cohen-Macaulay local ring.
\end{cor}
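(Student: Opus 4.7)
The plan is to chain together the two main technical tools developed just above, namely Theorem \ref{thm2}(3) and Proposition \ref{prop1}, exploiting the hypothesis that $R$ sits inside $M$ as an $R$-module direct summand.

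First, I would invoke Theorem \ref{thm2}(3): since $M$ is a sequentially Cohen-Macaulay $A$-module and $A$ is a module-finite algebra over $R$, the module $M$ is also sequentially Cohen-Macaulay when regarded as an $R$-module. This step is what converts the hypothesis into an assertion taking place entirely in the category of $R$-modules, where Proposition \ref{prop1} lives.

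Next, I would use the hypothesis that $R$ is an $R$-module direct summand of $M$ to write $M \cong R \oplus N$ for some finitely generated $R$-module $N$. If $N = (0)$, then $M \cong R$ as $R$-modules and the conclusion is immediate. Otherwise, Proposition \ref{prop1} applies to the decomposition $M \cong R \oplus N$: the sequential Cohen-Macaulayness of the sum $R \oplus N$ as an $R$-module forces each summand to be a sequentially Cohen-Macaulay $R$-module. In particular $R$ is sequentially Cohen-Macaulay over itself, which by definition means $R$ is a sequentially Cohen-Macaulay local ring.

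I do not expect any genuine obstacle here, since the corollary really is a direct concatenation of Theorem \ref{thm2}(3) with Proposition \ref{prop1}. The only minor point worth stating cleanly in the proof is the trivial-summand case $N = (0)$, so that one actually has the hypothesis $M, N \neq (0)$ required to apply Proposition \ref{prop1}; apart from this bookkeeping, no further computation is necessary.
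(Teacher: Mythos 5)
Your proof is correct and follows exactly the paper's own route: apply Theorem \ref{thm2}(3) to pass from sequential Cohen--Macaulayness over $A$ to sequential Cohen--Macaulayness over $R$, then apply Proposition \ref{prop1} to the decomposition $M = R \oplus N$. Your explicit handling of the degenerate case $N = (0)$ is a small point of care that the paper leaves implicit.
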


\begin{proof}\label{cor2}
We write $M = R \oplus N$ where $N$ is an $R$-submodule of $M$. Since $M$ is a sequentially Cohen-Macaulay $A$-module,  by Theorem \ref{thm2} it is a sequentially Cohen-Macaulay $R$-module as well, so that  by Proposition \ref{prop1}, $R$ is a sequentially Cohen-Macaulay local ring.
\end{proof}

\begin{cor}\label{cor3}
Suppose that $R$ is a direct summand of $A$ as an $R$-module. If $A$ is a sequentially Cohen-Macaulay local ring, then $R$ is a sequentially Cohen-Macaulay local ring.
\end{cor}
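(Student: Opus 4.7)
The plan is to recognize Corollary \ref{cor3} as the special case $M = A$ of Corollary \ref{cor1}, so essentially no new work is required beyond identifying the right inputs. First I would take $M = A$, viewed as a module over itself. Clearly $A$ is a finitely generated $A$-module, and by hypothesis $R$ is a direct summand of $A$ as an $R$-module.

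Next I would translate the standing hypothesis that $A$ is a sequentially Cohen-Macaulay local ring into the statement that $A$ is a sequentially Cohen-Macaulay $A$-module. Having done this, the hypotheses of Corollary \ref{cor1} are all in place with $M = A$, and applying that corollary yields the conclusion that $R$ is a sequentially Cohen-Macaulay local ring.

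If one prefers to bypass Corollary \ref{cor1} and argue directly, the same two ingredients suffice: by Theorem \ref{thm2}(3), $A$ sequentially Cohen-Macaulay as a local ring is equivalent to $A$ sequentially Cohen-Macaulay as an $R$-module; writing $A = R \oplus N$ as $R$-modules, Proposition \ref{prop1} then forces both summands, in particular $R$, to be sequentially Cohen-Macaulay $R$-modules, which is the desired conclusion. There is no real obstacle here — the only thing to be careful about is the $A$-module versus $R$-module distinction, which is precisely what Theorem \ref{thm2} was set up to handle.
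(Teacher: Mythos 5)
Your proposal is correct and matches the paper's intent exactly: Corollary \ref{cor3} is the special case $M=A$ of Corollary \ref{cor1}, with Corollary \ref{thm3} (equivalently Theorem \ref{thm2}(3)) supplying the translation between ``sequentially Cohen-Macaulay local ring'' and ``sequentially Cohen-Macaulay $R$-module,'' and Proposition \ref{prop1} handling the direct summand. Nothing further is needed.
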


We consider the invariant subring $R = A^G$. 

\begin{cor}\label{cor4}
Let $A$ be a Noetherian local ring, $G$ a finite subgroup of $\operatorname{Aut}A$. Suppose that the order of $G$ is invertible in $A$.  If $A$ is a sequentially Cohen-Macaulay local ring, then the invariant subring $R=A^G$ of $A$ is a sequentially Cohen-Macaulay local ring.
\end{cor}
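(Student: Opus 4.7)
The plan is to reduce the corollary to the already-established Corollary \ref{cor3} by exhibiting $R = A^G$ as an $R$-module direct summand of $A$. The key tool will be the classical Reynolds operator. I will define
$$
\rho : A \longrightarrow R, \qquad \rho(a) = \frac{1}{|G|}\sum_{g \in G} g(a),
$$
which is legitimate because $|G|$ is invertible in $A$, and whose image lies in $R$ since the defining sum is manifestly $G$-stable. Because each $r \in R$ is fixed by every $g \in G$, the map $\rho$ is $R$-linear and satisfies $\rho(r) = r$ for all $r \in R$; hence $\rho$ is a retraction of the inclusion $R \hookrightarrow A$ and produces the desired splitting $A \cong R \oplus \ker\rho$ of $R$-modules.

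Before invoking Corollary \ref{cor3}, I will verify the running structural hypotheses of Section 3, namely that $R$ is a Noetherian local ring and that $A$ is module-finite over $R$. For integrality, each $a \in A$ is a root of the monic polynomial $\prod_{g \in G}(X - g(a)) \in R[X]$, so $A$ is integral over $R$; combined with the uniqueness of the maximal ideal $\mathfrak{n}$ of $A$, the lying-over theorem forces $\mathfrak{n} \cap R$ to be the only maximal ideal of $R$, so $R$ is local. For Noetherianness of $R$, I will exploit the retraction $\rho$: an ascending chain $I_1 \subseteq I_2 \subseteq \cdots$ of ideals of $R$ extends to an ascending chain $\{I_k A\}$ in the Noetherian ring $A$, which stabilizes; the $R$-linearity of $\rho$ yields $\rho(I_k A) = I_k$ (since $\rho(\sum r_i a_i) = \sum r_i \rho(a_i) \in I_k$ while $\rho|_R = \mathrm{id}_R$), so the original chain stabilizes as well. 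Module-finiteness of $A$ over its invariant subring is a classical fact for finite group actions in this setting, and I will invoke it directly.

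With these prerequisites secured, $A$ is a sequentially Cohen-Macaulay local ring by hypothesis, and the splitting $A = R \oplus \ker\rho$ exhibits $R$ as a direct summand of $A$ as an $R$-module. Corollary \ref{cor3} then applies verbatim and delivers the conclusion that $R = A^G$ is a sequentially Cohen-Macaulay local ring. The only genuinely delicate point I expect is the module-finiteness of $A$ over $A^G$; the remainder is a clean application of the Reynolds averaging together with the direct-summand machinery already developed in Section 3.
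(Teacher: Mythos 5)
Your proposal is correct and follows essentially the same route as the paper: the paper's proof simply cites \cite{BR} for the facts that $A$ is module-finite over $R=A^G$ and that $R$ is an $R$-module direct summand of $A$, and then applies Corollary \ref{cor3}. You have merely made the Reynolds-operator splitting and the resulting Noetherian/local verifications explicit, while invoking module-finiteness as a classical fact exactly as the paper does.
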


\begin{proof}
Since the order of $G$ is invertible in $A$, $A$ is a module-finite extension of $R= A^G$ such that $R$ is a direct summand of $A$ (see \cite{BR} and reduce to the case where $A$ is a reduced ring). Hence the assertion follows from Corollary \ref{cor3}.
\end{proof}

\begin{rem} In the setting of Corollary \ref{cor4}, let $\{D_i\}_{0 \le i \le \ell}$ be the dimension filtration of $A$. Then every $D_i$ is a $G$-stable ideal of $A$  (compare with Theorem \ref{thm2} (1)) and the dimension filtration of $R$ is given by a refinement of $\{D_i^G\}_{0 \le i \le \ell}$. 
\end{rem}

The goal of this section is the following. 

\begin{cor}\label{cor5} 
Let $R$ be a Noetherian local ring, $M\neq (0)$ a finitely generated $R$-module.  We put $A = R\ltimes M$ the idealization of $M$ over $R$. Then the following conditions are equivalent.
\begin{enumerate}
\item[$(1)$] $A = R \ltimes M$ is a sequentially Cohen-Macaulay local ring.
\item[$(2)$] $A=R\ltimes M$ is a sequentially Cohen-Macaulay $R$-module. 
\item[$(3)$] $R$ is a sequentially Cohen-Macaulay local ring and $M$ is a sequentially Cohen-Macaulay $R$-module. 
\end{enumerate}
\end{cor}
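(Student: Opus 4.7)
The plan is to recognize that the idealization $A = R \ltimes M$ sits in exactly the framework of the preceding results, so the corollary should follow by stringing together Corollary \ref{thm3} and Proposition \ref{prop1} with essentially no new computation.

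First I would spell out the structure of $A$. As a set $A = R \oplus M$ with multiplication $(r,m)(r',m') = (rr', rm'+r'm)$, and the maximal ideal $\m \ltimes M$ makes it a Noetherian local ring. The key point is that $A$ is module-finite over $R$: indeed, the canonical identification $A \cong R \oplus M$ of $R$-modules shows this, since $M$ is finitely generated over $R$ by hypothesis. Thus $R \hookrightarrow A$ satisfies the hypothesis of the module-finite extension setup used in Section 3.

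Next I would derive the equivalence $(1) \Leftrightarrow (2)$ directly from Corollary \ref{thm3} applied to the module-finite local extension $R \to A$: $A$ is a sequentially Cohen-Macaulay local ring if and only if $A$ is a sequentially Cohen-Macaulay $R$-module. For the equivalence $(2) \Leftrightarrow (3)$, I would invoke Proposition \ref{prop1} with the decomposition $A \cong R \oplus M$ of $R$-modules; since both $R$ and $M$ are nonzero finitely generated $R$-modules, the proposition yields that $A$ is sequentially Cohen-Macaulay as an $R$-module if and only if both $R$ and $M$ are sequentially Cohen-Macaulay $R$-modules, which is exactly condition (3).

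There is no serious obstacle in this argument; the only point requiring a moment's care is verifying that the $R$-module isomorphism $A \cong R \oplus M$ is legitimate for applying Proposition \ref{prop1} (the ring structure of $A$ is irrelevant there, only the $R$-module structure) and that Corollary \ref{thm3} applies because $A$ is module-finite and local over $R$. Combining the two equivalences yields the cycle $(1) \Leftrightarrow (2) \Leftrightarrow (3)$ and completes the proof.
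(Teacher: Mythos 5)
Your argument is correct and is exactly the intended one: the paper states Corollary \ref{cor5} without proof precisely because it follows immediately from Corollary \ref{thm3} (applied to the module-finite local extension $R \to A$) together with Proposition \ref{prop1} (applied to the $R$-module decomposition $A \cong R \oplus M$). No gaps; your remark that only the $R$-module structure of $A$ matters for Proposition \ref{prop1} is the right point of care.
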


%%%%%%%%%%%%%%%%%%%%%%%%%%%%%%%%%%%%%%%%%%%%%%%%%%%%%%%%%%%%%%%%%%%%%%%%%%%%%%%%%%%%%%%%%%%%%%%%%%%%%%%%%%%%%%%%%%%%%%%%%%%%%%%%%%%%%%%%%%%%%%%%%%%%%%%%%%%%%%%%%%%%%%%%%%%%%%%%%%%%%%%%%%%%%%%%%%%%%%%%%%%%%

\vspace{0.5cm}

\begin{ac}
The authors would like to thank Professor Shiro Goto for his valuable advice and comments.
\end{ac}

%%%%%%%%%%%%%%%%%%%%%%%%%%%%%%%%%%%%%%%%%%%%%%%%%%%%%%%%%%%%%
%\addcontentsline{toc}{section}{references}

%\begin{thebibliography}{99}

%\bibitem{BH} W. Bruns and J. Herzog, Cohen-Macaulay rings, Cambridge studies in advanced mathematics {\bf 39}, Cambridge University Press, 1993.

\end{document}